\numberwithin{equation}{section} \numberwithin{figure}{section}
\numberwithin{table}{section} \setlength{\oddsidemargin}{0in}
\theoremstyle{plain}
\theoremstyle{definition}
\newtheorem{thm}{Theorem}
\newtheorem{lem}{Lemma}
\newtheorem{rem}{Remark}
\numberwithin{equation}{section} \numberwithin{lem}{section}
\numberwithin{thm}{section} \numberwithin{cor}{section}
\numberwithin{pro}{section} \numberwithin{rem}{section}
\begin{document}

\title[Confinement of bounded entire solutions to elliptic systems]{On the confinement of bounded entire solutions to a class of semilinear elliptic systems}



\author{Christos Sourdis} \address{Department of Mathematics and Applied Mathematics, University of
Crete.}
              \email{csourdis@tem.uoc.gr}           




\keywords{elliptic systems, maximum principle, Liouville theorem}

\maketitle

\begin{abstract}
Under appropriate assumptions, we show that all bounded entire
solutions to a class of semilinear elliptic systems are confined in
a convex domain. Moreover, we prove a Liouville type theorem in the
case where the domain is strictly convex. Our result represents an
extension, under less regularity assumptions, of a recent result in
\cite{smyrnelis}. We also provide several applications.
\end{abstract}

\section{Introduction and statement of the main result}

The following result is contained in the very recent paper of P.
Smyrnelis \cite{smyrnelis}:
\begin{thm}\label{thmsmyrnelis}
Let $W\in C^{2,\alpha}(\mathbb{R}^m,\mathbb{R})$, $\alpha\in (0,1)$,
be such that
\[
u\cdot \nabla W(u)>0\ \ \textrm{for}\ \ u\in \mathbb{R}^m\
\textrm{with}\ |u|>R,
\]
where $R>0$ is some constant. If $u\in
C^2(\mathbb{R}^n;\mathbb{R}^m)\cap
L^\infty(\mathbb{R}^n;\mathbb{R}^m)$ is an entire solution to the
equation
\begin{equation}\label{eqEq}
\Delta u =\nabla W(u),\ \ x\in \mathbb{R}^n,
\end{equation}
we have that $|u(x)|\leq R$, $x\in \mathbb{R}^n$. In addition, if
$u$ is not constant, then $|u(x)|<R$, $x\in \mathbb{R}^n$.
\end{thm}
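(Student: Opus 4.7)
The plan is to analyze the scalar auxiliary function $\varphi(x):=|u(x)|^2$. Taking the inner product of equation \eqref{eqEq} with $u$ and using the product rule yields the pointwise identity
\[
\tfrac{1}{2}\Delta\varphi=|\nabla u|^2+u\cdot\nabla W(u).
\]
By hypothesis, the right-hand side is strictly positive on the open set $\{|u|>R\}$, and by continuity of the map $v\mapsto v\cdot\nabla W(v)$ the inequality $v\cdot\nabla W(v)\geq 0$ also holds on the sphere $\{|v|=R\}$.

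For the first assertion I would argue by contradiction: suppose $M:=\sup_{\mathbb{R}^n}|u|>R$ and pick $x_k\in\mathbb{R}^n$ with $|u(x_k)|\to M$. The translates $u_k(y):=u(y+x_k)$ satisfy the same equation, and since $u$ is bounded and $W\in C^{2,\alpha}$, interior Schauder estimates give uniform $C^{2,\alpha}$ bounds on every compact subset. Extracting a subsequence, $u_k\to u_\infty$ in $C^2_{\mathrm{loc}}$ where $u_\infty$ is an entire solution with $|u_\infty(0)|=M=\sup|u_\infty|$. Evaluating the identity at the interior maximum $0$ of $\varphi_\infty:=|u_\infty|^2$ yields $0\geq\tfrac{1}{2}\Delta\varphi_\infty(0)\geq u_\infty(0)\cdot\nabla W(u_\infty(0))>0$, a contradiction.

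For the strict inequality I would convert the identity into a linear differential inequality valid on all of $\mathbb{R}^n$. The key auxiliary estimate is that there exists $K>0$, depending only on $W$ and $R$, such that
\[
G(v):=v\cdot\nabla W(v)\geq -K\bigl(R^2-|v|^2\bigr)\qquad\text{for every }v\in\mathbb{R}^m\text{ with }|v|\leq R;
\]
this follows from a one-dimensional mean value argument along the radial segment from $v$ to $(R/|v|)v$, exploiting that $G$ is $C^1$ and $G\geq 0$ on $\{|v|=R\}$. Combining this with the identity and the already established bound $|u|\leq R$ gives
\[
\Delta\eta-2K\eta\leq 0\quad\text{on }\mathbb{R}^n,\qquad\eta(x):=R^2-\varphi(x)\geq 0.
\]
Since the zeroth-order coefficient $-2K$ is nonpositive, the strong maximum principle applies: if $\eta(x_0)=0$ at some point then $\eta\equiv 0$ on $\mathbb{R}^n$, and therefore $|u|\equiv R$. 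Returning to the identity, $|\nabla u|^2+G(u)\equiv 0$ together with $G(u)\geq 0$ on $\{|u|=R\}$ forces $\nabla u\equiv 0$, so $u$ must be constant, which is the desired contrapositive. The main technical hurdle is the radial estimate on $G$; everything else reduces to standard Schauder theory and the strong maximum principle for linear elliptic operators with nonpositive zeroth-order coefficient.
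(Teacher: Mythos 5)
Your argument is correct, and it is worth noting how it sits relative to the two proofs in play. It is genuinely different from the $P$-function proof in \cite{smyrnelis} (which works with $\tfrac12|\nabla u|^2+C(|u|^2-R^2)$ and needs $W\in C^{2,\alpha}$ to differentiate $|\nabla u|^2$); like the paper, you avoid that and only ever differentiate $|u|^2$. Your first step is essentially the paper's Lemma \ref{lemmine} specialized to the ball: the same translation--compactness scheme at a maximizing sequence, except that you test with the radial function $|u|^2$ directly instead of reducing to a linear component via tangent hyperplanes, and you conclude with the second-derivative test at the interior maximum rather than the strong maximum principle --- a harmless simplification. For the Liouville part your route parallels, but does not copy, the proof of Theorem \ref{thmmine}: where the paper works locally with $U=d(u(x))$ (signed distance to $\partial\mathcal D$), bounds the quotient $Q=\nabla d(u)\cdot F(u)/U$ from above via Lipschitz continuity of $F$, invokes a refinement of Hopf's boundary point lemma near a touching point, and then propagates $U\equiv 0$ by continuation before using strict convexity of $\partial\mathcal D$ to kill $\nabla u$, you instead derive the global linear inequality $\Delta\eta\le 2K\eta$ for $\eta=R^2-|u|^2\ge 0$, with the radial mean-value bound $v\cdot\nabla W(v)\ge -K(R^2-|v|^2)$ on $\bar B_R$ playing exactly the role of the paper's bound on $Q$, and conclude in one stroke from the strong maximum principle with nonpositive zeroth-order coefficient, reading off $\nabla u\equiv 0$ from the identity $\tfrac12\Delta|u|^2=|\nabla u|^2+u\cdot\nabla W(u)$ on $\{|u|=R\}$. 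What your version buys is a shorter, fully global argument tailored to the ball (and it visibly needs only $\nabla W$ locally Lipschitz); what the paper's version buys is generality, since the signed-distance/Hopf mechanism works for an arbitrary smooth strictly convex $\mathcal D$ where no convenient global defect function such as $R^2-|u|^2$ is available.
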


The proof is based on the $P$-function technique, see \cite{sperb}
for the case of scalar equations. Essentially, this technique
consists in applying the maximum principle to a  second order
elliptic equation that is satisfied by a convenient scalar function
$P(u;x)$ where $u$ solves (\ref{eqEq}). The choice made in
\cite{smyrnelis} was
\[
P(u;x)=\frac{1}{2}|\nabla u(x)|^2+C\left(\left|u(x)\right|^2-R^2
\right),
\]
for some large constant $C>0$. In fact, the gradient structure of
the righthand side of (\ref{eqEq}) did not play any role in the
proof of the above theorem; in this regard, see \cite[Thm. 2.4]{alamaPeli}.
 We point out that the reason for
assuming that $W\in C^{2,\alpha}$ was to justify taking the
Laplacian of the above function $P$.

The purpose of this note is to prove the following extension and
improvement (as far as regularity is concerned) of the above result,
and present some applications.

%


\begin{thm}\label{thmmine}
Let $\mathcal{D}$ be a smooth convex domain of $\mathbb{R}^m$ (at
least $C^2$). We assume that $F\in C^{0,1}(\mathbb{R}^m;\mathbb{R})$
and
\begin{equation}\label{eqass}
(u-u_0)  \cdot F(u)>0\ \ \forall\ u\in \mathbb{R}^m\setminus
\bar{\mathcal{D}},
\end{equation}
where  $u_0\in \partial \mathcal{D}$ is such that
$|u-u_0|=\textrm{dist}(u,\partial \mathcal{D})$.

 If $u\in
C^2(\mathbb{R}^n;\mathbb{R}^m)\cap L^\infty
(\mathbb{R}^n;\mathbb{R}^m)$ is an entire solution of
\begin{equation}\label{eqEqnew} \Delta u=F(u)\ \ \textrm{in}\ \
\mathbb{R}^n,
\end{equation}
then
\begin{equation}\label{eqassert1}
u(x)\in \bar{\mathcal{D}},\ \ x\in \mathbb{R}^n.
\end{equation}
In addition, if $\mathcal{D}$ is strictly convex, and $u$ is non
constant, we have that
\[
u(x)\in {\mathcal{D}},\ \ x\in \mathbb{R}^n.
\]
\end{thm}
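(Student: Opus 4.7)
The plan is to derive the confinement from a maximum principle applied to a suitable squared-distance function of $u$. For the first assertion, I would work with
\[
\Phi(x):=\tfrac{1}{2}\operatorname{dist}\bigl(u(x),\bar{\mathcal D}\bigr)^{2},
\]
exploiting that $\psi(v):=\tfrac12\operatorname{dist}(v,\bar{\mathcal D})^{2}$ is globally convex and $C^{1,1}$ (by convexity of $\bar{\mathcal D}$), with $\nabla\psi(v)=v-v_{0}$, and is $C^{2}$ on $\mathbb R^{m}\setminus\bar{\mathcal D}$ (by $\partial\mathcal D\in C^{2}$). On the open set $\Omega:=\{x:u(x)\notin\bar{\mathcal D}\}$ the chain rule and (\ref{eqEqnew}) yield
\[
\Delta\Phi=\operatorname{tr}\bigl(D^{2}\psi(u)\,(Du)(Du)^{T}\bigr)+(u-u_{0})\cdot F(u),
\]
where the trace term is $\ge 0$ (product of two positive semidefinite matrices) and the second term is $>0$ by (\ref{eqass}). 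Since $\Phi$ is bounded, if $\Omega\ne\emptyset$ then $M:=\sup\Phi>0$, and a contradiction with $\Delta\Phi\le 0$ follows either directly at an interior maximiser or, if $M$ is not attained, after translating along a maximising sequence $u_{k}(y):=u(y+x_{k})$ and using Schauder estimates (which apply because $F\in C^{0,1}$ provides uniform $C^{2,\alpha}_{\mathrm{loc}}$ bounds on $u$) to pass to a $C^{2}_{\mathrm{loc}}$ limit at which $M$ is attained at the origin.

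For the second assertion, I would argue by contradiction: suppose $u$ is non-constant and $u(x_{0})=p\in\partial\mathcal D$. The relevant auxiliary function is the signed distance $d$ to $\partial\mathcal D$ (negative in $\mathcal D$), which by convexity of $\mathcal D$ is a globally convex function, and by $\partial\mathcal D\in C^{2}$ is $C^{2}$ on a tubular neighbourhood $N$ of $\partial\mathcal D$. Setting $\widetilde\Phi:=d\circ u\le 0$, the parallel calculation on the preimage of $N$, together with the limiting form $\nu(u^{*})\cdot F(u^{*})\ge 0$ of (\ref{eqass}) and the Lipschitz bound
\[
\bigl|\nabla d(u)\cdot\bigl[F(u)-F(u^{*})\bigr]\bigr|\le L|u-u^{*}|=-L\widetilde\Phi,\qquad L:=\mathrm{Lip}(F),
\]
produces the linear inequality $\Delta\widetilde\Phi\ge L\widetilde\Phi$. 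The strong maximum principle at $x_{0}$ forces $\widetilde\Phi\equiv 0$ on the connected component $V\ni x_{0}$ of $\{u\in N\}$. A standard open-and-closed argument (the closure of $V$ cannot leave $\{d\circ u=0\}$ without first exiting $N$) then upgrades this to $V=\mathbb R^{n}$, so $u(\mathbb R^{n})\subset\partial\mathcal D$.

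To complete the contradiction, differentiate the identity $d(u(x))\equiv 0$ twice in $x$ and take the trace to obtain
\[
\nu(u)\cdot F(u)=\nu(u)\cdot\Delta u=-\operatorname{tr}\bigl(D^{2}d(u)\,(Du)(Du)^{T}\bigr).
\]
The left-hand side is $\ge 0$ and the right is $\le 0$, so both vanish; strict convexity of $\mathcal D$, which makes the second fundamental form of $\partial\mathcal D$ positive definite on the tangent space, then forces $Du\equiv 0$, contradicting non-constancy. The main obstacle I expect is precisely securing the \emph{linear} inequality $\Delta\widetilde\Phi\ge L\widetilde\Phi$ in the second part: it is the Lipschitz hypothesis on $F$ (not merely continuity or Hölder regularity) that produces the required linear comparison, without which neither the classical strong maximum principle nor the subsequent clopen propagation would be available.
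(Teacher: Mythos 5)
Your proposal is correct, and it is a hybrid relative to the paper: the second assertion follows essentially the paper's own scheme, while the first is proved by a genuinely different device. For the confinement (\ref{eqassert1}) the paper does not use a distance function at all: it writes $\bar{\mathcal{D}}$ as an intersection of supporting half-spaces, rotates so that a given tangent plane becomes $\{u_1=0\}$, and invokes Lemma \ref{lemmine} (a single-component bound obtained by translating along a maximizing sequence, uniform Schauder estimates, and the strong maximum principle applied to the limit). Your choice of $\Phi=\tfrac12\,\mathrm{dist}(u,\bar{\mathcal{D}})^2$ replaces the half-space reduction but keeps exactly the same translation--compactness mechanism; what it buys is that hypothesis (\ref{eqass}) is used precisely in the form stated (positivity of $F$ against the normal at the \emph{nearest} boundary point), whereas the half-space reduction requires positivity of the component of $F$ along the normal of a fixed supporting hyperplane, which does not follow verbatim from (\ref{eqass}); in this respect your route is the more faithful one. (You should record why $\mathrm{dist}^2$ is $C^2$ on all of $\mathbb{R}^m\setminus\bar{\mathcal{D}}$, not only near $\partial\mathcal{D}$: by convexity the normal map $(\xi,t)\mapsto \xi+t\nu(\xi)$ has invertible differential for every $t>0$, so the nearest-point projection is $C^1$ on the whole exterior; alternatively the global $C^{1,1}$ regularity and convexity of $\Phi$ would suffice with an a.e.\ version of the computation.) For the second assertion, your signed-distance calculation, the boundary limit $\nu\cdot F\ge 0$ of (\ref{eqass}), the Lipschitz bound yielding $\Delta\widetilde{\Phi}\ge L\widetilde{\Phi}$, the strong maximum principle plus the clopen continuation, and the final step combining tangency of the vectors $\partial_i u$ with positive definiteness of the second fundamental form, follow the same path as the paper (which phrases the linear inequality as $\Delta U\ge QU$ with $Q$ only bounded above and cites a refined Hopf lemma); your version, with zeroth-order coefficient $-L\le 0$, uses the textbook maximum principle and is if anything cleaner, and your last step is more careful than (\ref{eqpsar})--(\ref{eqcomplex}) as written, since $\partial^2 d$ annihilates the normal direction and is positive definite only on the tangent space of $\partial\mathcal{D}$ --- which is exactly where $\partial_i u$ lies once $d(u)\equiv 0$, the point your argument makes explicit.
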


\begin{rem}
Part of the above theorem as well as part of the result in
Subsection \ref{subAllen} appeared in a remark in a subsequent
version of \cite{smyrnelis} which, however, appeared after the
current paper was posted on the Arxiv.
\end{rem}

\section{Proof of the main result}
The first assertion of Theorem \ref{thmmine} will follow from the
following lemma which is of independent interest.

\begin{lem}\label{lemmine}
Let $F\in C^{0,1}(\mathbb{R}^m;\mathbb{R}^m)$ satisfy
\begin{equation}\label{eqassnew}
F(u_1,u_2,\cdots,u_m)\cdot (u_1,0,\cdots,0)>0 \ \ \textrm{if}\
 u_1>L,\ u_i\in \mathbb{R},\ i=2,\cdots,m,
\end{equation}
for some $L\geq 0$.

If $u=(u_1,\cdots,u_m)\in C^2(\mathbb{R}^n;\mathbb{R}^m)$ is an
entire bounded solution to the elliptic system (\ref{eqEqnew}), we
have that
\[
 u_1(x) \leq L,\ \ x\in \mathbb{R}^n.
\]
\end{lem}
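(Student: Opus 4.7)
My plan is to argue by contradiction. Assume that
\[
M := \sup_{\mathbb{R}^n} u_1 > L,
\]
and pick a sequence $\{x_k\}\subset \mathbb{R}^n$ along which $u_1(x_k)\to M$. Since the supremum may fail to be attained, the first step is to manufacture a new solution at which the supremum of the first component is actually realized at an interior point. I would consider the translates $u^{(k)}(x):=u(x+x_k)$, which all satisfy the same system $\Delta u^{(k)} = F(u^{(k)})$ and are uniformly bounded. Because $F\in C^{0,1}$, the right-hand side $F(u^{(k)})$ is uniformly bounded in $L^\infty$, so standard interior $W^{2,p}$ estimates followed by Schauder estimates give uniform $C^{2,\alpha}_{\mathrm{loc}}$ bounds on the family $\{u^{(k)}\}$. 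Arzel\`a--Ascoli then provides a subsequence converging in $C^2_{\mathrm{loc}}(\mathbb{R}^n;\mathbb{R}^m)$ to an entire bounded solution $v$ of $\Delta v = F(v)$ satisfying $v_1(0)=M$ and $v_1(y)\le M$ for every $y\in\mathbb{R}^n$.

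Once such a limit $v$ is in hand, the argument collapses to a pointwise evaluation at the origin. Since $v_1$ attains its global maximum at $x=0$, one has $\Delta v_1(0)\le 0$. On the other hand, $v_1(0)=M>L\ge 0$, so the hypothesis \eqref{eqassnew} applied at $u=v(0)$ forces $v_1(0)\,F_1(v(0))>0$, and hence $F_1(v(0))>0$. Reading the first component of the equation at the origin then gives $\Delta v_1(0)=F_1(v(0))>0$, contradicting the previous inequality. Therefore $u_1\le L$ on all of $\mathbb{R}^n$, as claimed.

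The step demanding the most care is the uniform elliptic regularity needed to extract the $C^2_{\mathrm{loc}}$ limit, since here $F$ is only Lipschitz rather than $C^{2,\alpha}$ as in Theorem~\ref{thmsmyrnelis}. This is exactly the regularity relaxation advertised in the introduction and the reason the $P$-function technique is replaced by the translation--compactness argument sketched above; beyond this, the proof amounts to nothing more than a one-point application of the maximum principle combined with the strict sign condition \eqref{eqassnew}.
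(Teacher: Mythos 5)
Your proposal is correct and follows essentially the same route as the paper: translate by an almost-maximizing sequence, use uniform $W^{2,p}$/Schauder estimates and Arzel\`a--Ascoli to pass to an entire limit solution $v$ whose first component attains its supremum $M>L$ at the origin, then derive a contradiction from \eqref{eqassnew}. The only (harmless) difference is at the final step, where you use the elementary pointwise fact $\Delta v_1(0)\le 0$ at an interior maximum instead of the paper's invocation of the strong maximum principle on a small ball where $\Delta V_1>0$; both close the argument.
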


\begin{proof}
We will  argue by contradiction. 
For this purpose, suppose that
\begin{equation}\label{eqcontranew}M=\sup_{x\in
\mathbb{R}^n}u_1(x)>L,
\end{equation}
(clearly $M<\infty$). There exist $x_j\in \mathbb{R}^n$ such that
\[
u_1(x_j)\to M.
\]
Let
\[
v_j(x)=u(x+x_j).
\]
We have that
\[
\Delta v_j=F(v_j), \ \ |v_j|\leq C_1, \ \ x\in\mathbb{R}^n,\ \ j\geq
1,
\]
for some $C_1>0$. Moreover, the first component of  $v_j$ satisfies
\[
\left(v_j\right)_1(0)=u_1(x_j)\to M\ \ \textrm{and}\ \
\left(v_j\right)_1(x)\leq M,\ \ x\in \mathbb{R}^n.
\]
 By standard interior elliptic regularity estimates
\cite{evans,Gilbarg-Trudinger}, we deduce that
\begin{equation}\label{eqgrad}
\| v_j\|_{C^{2,\alpha}(\mathbb{R}^n;\mathbb{R}^m)}\leq C_2,\ \ j\geq
1,
\end{equation}
where $0<\alpha<1$ is fixed, for some $C_2>0$.
 Hence, by  well known compactness imbeddings (see \cite{evans,Gilbarg-Trudinger}), and the standard
diagonal Cantor type argument, passing to a subsequence if
necessary, we find that
\[
v_j\to V \ \ \textrm{in}\ \ C^2_{loc}(\mathbb{R}^n;\mathbb{R}^m),
\]
for some $V=(V_1,\cdots,V_m)$ which satisfies
\begin{equation}\label{eqcontra2new}
\Delta V=F(V)\ \ \textrm{in}\ \ \mathbb{R}^n, \ \ \textrm{and}\ \
V_1(0)=\sup_{x\in \mathbb{R}^n}V_1(x)=M.
\end{equation}
In view of (\ref{eqassnew}) and (\ref{eqcontranew}), we may assume
that
\[
\Delta V_1>0,\ \ |x|<\delta,
\]
for some small $\delta>0$. On the other hand, the second relation in
(\ref{eqcontra2new})  contradicts the strong maximum principle (see
\cite{evans,Gilbarg-Trudinger}).
\end{proof}

We can now proceed to the proof of our main result.

\begin{proof}[Proof of Theorem \ref{thmmine}]
Let $p\in \partial D$ and $T_p$ denote the tangent plane to
$\partial D$ at $p$. That tangent plane separates $\mathbb{R}^m$ to
two open connected components. The one component contains
$\mathcal{D}$ and the other one contains $\mathbb{R}^m \setminus
\bar{\mathcal{D}}$. The first assertion of the theorem will follow
if we show that the points $u(x)$  belong to the closure of the
component that contains $\mathcal{D}$ for every $x\in \mathbb{R}^n$.
Since the equation (\ref{eqEqnew}) is invariant under
 translations and rotations, we may assume that $p$ is the origin
 and that $T_p$ is the hyperplane $\{u_1=0 \}$ with $\mathcal{D}\subset \{u_1<0 \}$.
Clearly, assumption (\ref{eqassnew}) is satisfied with $L=0$. It
then follows from Lemma \ref{lemmine} that the first component of
$u$ satisfies is nonnegative, as desired.

The second assertion of the theorem  follows directly from
\cite{weinberger}, something which was not noticed in \cite{smyrnelis}. For the sake of completeness, we will give a
self-contained proof in the spirit of \cite{wang}. Let $u\in
C^2(\mathbb{R}^n;\mathbb{R}^m)$ be a solution to (\ref{eqEqnew})
such that (\ref{eqassert1}) holds and $u(x_0)\in
\partial \mathcal{D}$ for some $x_0 \in \mathbb{R}^n$, where
$\mathcal{D}$  is additionally assumed to be strictly convex.
 We denote the
signed distance of a point $u\in \mathbb{R}^m$ from $\partial
\mathcal{D}$ by $d(u)$, that is $d(u)<0$ if $u\in \mathcal{D}$ and
$d(u)>0$ if $u\in \mathbb{R}^m\setminus \bar{\mathcal{D}}$. It is
well known that the function $d$ is convex in $\mathbb{R}^m$, and
smooth in a tubular neighborhood of $\partial \mathcal{D}$ (see
\cite{Gilbarg-Trudinger}). In particular, by the strict convexity of
$\partial \mathcal{D}$, we have that the Hessian
\begin{equation}\label{eqpsar}
\partial^2 d(u)\ \ \textrm{is positive definite for}\ u\in
\partial \mathcal{D}.
\end{equation}
The function
\[
U(x)=d\left(u(x) \right)
\]
is smooth in a neighborhood of $x_0$, say if $|x-x_0|<\epsilon$ for
some small $\epsilon>0$. For such $x$, using (\ref{eqass}),
(\ref{eqEqnew}) and (\ref{eqpsar}), we find that
\begin{equation}\label{eqcomplex}\begin{array}{rcl}
   \Delta U(x) & = & \textrm{tr} \left\{\left(\partial^2 d\left(u(x)\right)\right)\left(\nabla u(x)\right)\left(\nabla
u(x)\right)^T\right\}+ \left[(\nabla d)\left(u(x)
\right)\right]\cdot F\left(u(x) \right) .
 \\
      &  &  \\
      & \geq &  c |\nabla u(x)|^2+ \left[(\nabla d)\left(u(x)
\right)\right]\cdot F\left(u(x) \right),
  \end{array}
\end{equation}
for some $c>0$, having decreased $\epsilon>0$ if needed.
 For
$|x-x_0|<\epsilon$, let
\[
Q(x)=\left\{\begin{array}{cc}
              \frac{\left[(\nabla d)\left(u(x)
\right)\right] \cdot F\left(u(x) \right)}{U(x)}, &\textrm{if}\ U(x)<0, \\
                &   \\
              0, & \textrm{otherwise}.
            \end{array}
 \right.
\]
If $u(x)\in \mathcal{D}$ with $|x-x_0|<\epsilon$, let $\tilde{u}\in
\partial \mathcal{D}$ be such that $U(x)=-\left|u(x)-\tilde{u}
\right|$. Note that $\nabla d(\tilde{u})=\nu_{\tilde{u}}$, where
$\nu_{\tilde{u}}$ denotes the outward unit normal vector to
$\partial \mathcal{D}$ at $\tilde{u}$. So, from (\ref{eqass}), we
have that
\[
Q(x)\leq \frac{\left[(\nabla d)\left(u(x) \right)\right] \cdot
F\left(u(x) \right)-\nabla d(\tilde{u}) \cdot F\left(\tilde{u}
\right)}{-\left|u(x)-\tilde{u} \right|}\leq C_3,
\]
for some constant $C_3>0$, where we used the Lipschitz continuity of
$F$ and the smoothness of $\partial \mathcal{D}$. Since
\[
\Delta U\geq Q(x)U\ \ \textrm{if}\ \ |x-x_0|<\epsilon,\ \
\textrm{and}\ \ U(x)\leq 0=U(x_0),
\]
a refinement of Hopf's boundary point lemma (see \cite[Ch.
9]{evans})
 yields that $\nabla U(x_0)\neq 0$ or $U$ is constant, namely zero, for
 $|x-x_0|<\epsilon$ (apply the aforementioned lemma in \cite{evans} for $v=-U\geq 0$
 and $c=Q$, noting that $c$ bounded from above suffices for the proof to go through).
On the other hand, since $U(x)\leq 0=U(x_0)$ for $|x-x_0|<\epsilon$,
we have that $\nabla U(x_0)=0$.
  Thus, only the second scenario is possible.
 Hence, we infer that
 \[
U(x)=0 \ \ \textrm{if}\ \ |x-x_0|<\epsilon. \] By a simple
continuity argument, we have that
\[
U(x)=0, \ \ x\in \mathbb{R}^n,\ \ \textrm{that is}\ \ u(x)\in
\partial \mathcal{D},\ \ x\in \mathbb{R}^n.
\]
Observe that this holds for $\mathcal{D}$ smooth and merely convex.
Now, we will make use of the strict inequality in (\ref{eqpsar}). By
differentiating the above relation, and making use of
(\ref{eqcomplex}), we infer that
\[
\nabla u(x)=0,\ \ x\in \mathbb{R}^n,
\]
that is
\[
u(x)=u(x_0),\ \ x\in \mathbb{R}^n,
\]
as desired.
\end{proof}
\section{Applications}
Below, we will present some applications of our main result.

\subsection{The Ginzburg-Landau system}
Consider the Ginzburg-Landau system which arises in
superconductivity:
\[
A \Delta u=\left( |u|^2-1\right)u,\ \ x\in \mathbb{R}^n,
\]
where $u$ takes values in $\mathbb{R}^m$ and $A$ is a diagonal
matrix with positive entries in the diagonal (see for example
\cite[pg. 210]{smoller}). In the case where $A$ is the identity, it was shown
in \cite{smyrnelis}, as a corollary of Theorem \ref{thmsmyrnelis},
that every entire bounded solution satisfies $|u|\leq 1$ in
$\mathbb{R}^n$, and $|u|<1$ in $\mathbb{R}^n$ if $u$ is nonconstant
(actually, it was already shown in \cite{farinaDiffInt} that every
entire solution is bounded and satisfies $|u|\leq 1$ in
$\mathbb{R}^n$). In the general case, where $A$ is not a positive
constant multiple of the identity, it follows readily from Theorem
\ref{thmmine} that the same properties continue to hold (actually, this was also proven in a more general result in \cite{smyrnelis} that is in the spirit of Theorem \ref{thmsmyrnelis}). Indeed,
firstly observe that the function $v=Au$ satisfies
\[
\Delta v=\left(|A^{-1}v|^2-1 \right)A^{-1}v\ \ \textrm{in}\ \
\mathbb{R}^n.
\]
Let $\mathcal{D}$ be the smooth and strictly convex domain
$\left\{v\in \mathbb{R}^m\ :\ |A^{-1}v|<1 \right\}$. Let $v\in
\mathbb{R}^m\setminus \bar{D}$, that is $|A^{-1}v|>1$, and $v_0\in
\partial \mathcal{D}$ be such that $|v-v_0|=\textrm{dist}(v,\partial
\mathcal{D})$. Since the outer unit normal vector to $\partial
\mathcal{D}$ at $v_0$ is $\frac{A^{-2}v_0}{|A^{-2}v_0|}$, we have
that
\[
v=v_0+\frac{|v-v_0|}{|A^{-2}v_0|}A^{-2}v_0.
\]
Using this, we find readily that
\[\begin{array}{rcl}
  \left(|A^{-1}v|^2-1 \right)A^{-1}v\cdot (v-v_0) &= & \left(|A^{-1}v|^2-1 \right)
\frac{|v-v_0|}{|A^{-2}v_0|}\left(A^{-1}v\right)\cdot
\left(A+\frac{|v-v_0|}{|A^{-2}v_0|}
A^{-1} \right)^{-1}\left(A^{-1}v\right) \\
      &   &   \\
      &  \geq  & c|A^{-1}v|^2>c,
  \end{array}
\]
for some positive $c$. Theorem \ref{thmmine} then implies that
$|A^{-1}v|\leq 1$ in $\mathbb{R}^n$, and $|A^{-1}v|< 1$ in
$\mathbb{R}^n$ if $v$ is nonconstant. The corresponding assertions
for $u=A^{-1}v$ follow at once.
\subsection{The vectorial Allen-Cahn equation}\label{subAllen}
Let $W:\mathbb{R}^2\to \mathbb{R}$ be a smooth function with three
global nondegenerate  minima at $a,b,c\in \mathbb{R}^2$ (not
contained in the same line). Some special bounded solutions $u\in
C^2(\mathbb{R}^2;\mathbb{R}^2)$ of the elliptic system
(\ref{eqEq}) with $n=2$, taking values close to $a$, $b$ or $c$
away from three half-lines (domain walls) that meet at the origin,
are related to the study of some models of three-boundary motion
in material science (see \cite{saez} and the references therein).
The most natural choice is
\[
W(u)=|u-a|^2|u-b|^2|u-c|^2.
\]
Let $u$ be a bounded entire solution to (\ref{eqEq}) for this $W$.
By translating and rotating this solution, we may assume that the
resulting function $\tilde{u}$ solves (\ref{eqEq}) with $W$ as above
but with $a=(0,a_2)$,  $b=(0,-a_2)$, $c=(c_1,c_2)$ such that $a_2>0$
and $c_1<0$. It is easy to show that (\ref{eqassnew}) is satisfied.
Hence, by Lemma \ref{lemmine}, we see that the first component of
$\tilde{u}$ is non-positive. In turn, reversing the Euclidean
motions, this implies that the values of $u$ are on the same side of
the line joining $a$ and $b$ as the triangle $\widehat{abc}$.
Analogously, we can show that the range of $u$ is contained in the
closed $\widehat{abc}$ triangle. In fact, from the proof of the
second assertion of this theorem, we find that if a bounded entire
solution touches one of the sides of the triangle, then it must be
contained in this side for all $x\in \mathbb{R}^n$; clearly, this
cannot happen for the solutions constructed in \cite{saez} which
``take'' all three phases.

\subsection{Symmetry of components of a semilinear elliptic system}
Our Lemma \ref{lemmine} also implies the following interesting
property: If $F\in C^{0,1}(\mathbb{R}^2;\mathbb{R}^2)$ satisfies
\[
(-u_2,u_1)\cdot F(u_1,u_2)>0\ \ \textrm{for}\ \ u_1\neq u_2,
\]
then every bounded entire solution $u=(u_1,u_2)$ of (\ref{eqEqnew})
satisfies
\[
u_1(x)=u_2(x),\ \ x\in \mathbb{R}^n.
\]

\subsection{Domain walls in the coupled Gross-Pitaevskii equations}
In \cite{alamaPeli}, the authors studied solutions to the system
\begin{equation}\label{eqGP}\begin{array}{l}
  u_1''=g_{11}(u_1^2-a^2)u_1+g_{12}u_1 u_2^2, \\
    \\
  u_2''=g_{22}(u_2^2-b^2)u_2+g_{12}u_1^2 u_2, \\
\end{array}
\end{equation}
with
\begin{equation}\label{eqGPbdry}
\left(u_1(x),u_2(x)\right)\to (a,0)\ \textrm{as}\ x\to \infty;\ \
\left(u_1(x),u_2(x)\right)\to (0,b)\ \textrm{as}\ x\to -\infty,
\end{equation}
where
\begin{equation}\label{eqSegr---}
a=\frac{\sqrt{\mu}}{\sqrt[4]{g_{11}}}, \ \
b=\frac{\sqrt{\mu}}{\sqrt[4]{g_{22}}},
\end{equation}
for some $\mu>0$ such that
\begin{equation}\label{eqSegre}
g_{12}>\sqrt{g_{11}g_{22}}.
\end{equation}
This heteroclinic connection problem arises in the study of domain
wall solutions in coupled Gross-Pitaevskii equations on the real
line. Among the many results, in Theorem 2.4 they showed that
solutions of (\ref{eqGP})--(\ref{eqGPbdry}) satisfy
\[
\frac{u_1^2(x)}{a^2}+\frac{u_2^2(x)}{b^2}\leq 1, \ \ x\in
\mathbb{R}.
\]
Their approach was based on a rather ad-hoc argument in the spirit
of the P-function method that we discussed in the introduction. As
an application of our Theorem \ref{thmmine} we can provide a
simpler proof  which, in fact, holds for any entire, bounded
solution to the corresponding elliptic system to (\ref{eqGP}).

Lets take a point $(u_1,u_2)$ outside of the above ellipse (which
is clearly convex), that is
\begin{equation}\label{eqellipseOUT}
\frac{u_1^2}{a^2}+\frac{u_2^2}{b^2}>1.
\end{equation}
By the symmetry of the system, we may assume without loss of
generality that $u_1,u_2\geq 0$. Let $(u_1^0,u_2^0)$ be its
closest point on the ellipse. The  latter point can be given
explicitly (see \cite[pg. 54]{convex}) but this will not be
needed. The only thing that we will use is that the vector
$(u_1-u_1^0,u_2-u_2^0)$ has nonnegative components and at least
one which is positive. To conclude, we note that, for such
$(u_1,u_2)$, the corresponding component of the system
(\ref{eqGP}) is positive as well. Really, assume that $u_1>0$.
Then, thanks to (\ref{eqSegr---}), (\ref{eqSegre}) and
(\ref{eqellipseOUT}), we find that
\[\begin{array}{ccc}
  g_{11}(u_1^2-a^2)u_1+g_{12}u_1 u_2^2 & > & -g_{11}\frac{a^2}{b^2}u_2^2u_1+g_{12}u_1 u_2^2 \\
    &   &   \\
    &  = &  (g_{12}-\sqrt{g_{11}g_{22}})u_1u_2^2\geq 0. \\
\end{array}
\]
Analogously we argue for the second equation.

\section*{Acknowledgements} This research was supported by the
ARISTEIA (Excellence) programme ``Analysis of discrete, kinetic
and continuum models for elastic and viscoelastic response'' of
the Greek Secretariat of Research.

\end{document}